\theoremstyle{plain}
\newtheorem{thm}{\protect\theoremname}
  \theoremstyle{plain}
  \newtheorem{conjecture}[thm]{\protect\conjecturename}
  \theoremstyle{remark}
  \newtheorem*{acknowledgement*}{\protect\acknowledgementname}
  \theoremstyle{definition}
  \newtheorem{defn}[thm]{\protect\definitionname}
  \theoremstyle{plain}
  \newtheorem{prop}[thm]{\protect\propositionname}
  \theoremstyle{remark}
  \newtheorem{claim}[thm]{\protect\claimname}
\newcommand{\lyxaddress}[1]{
\par {\raggedright #1
\vspace{1.4em}
\noindent\par}
}
\newcommand{\hide}[1]{}
\DeclareMathOperator*{\dom}{dom}
\DeclareMathOperator{\bdim}{dim_\textrm{B}}
\DeclareMathOperator{\hdim}{dim_\textrm{H}}
\date{}
\def\blfootnote{\xdef\@thefnmark{}\@footnotetext}
  \providecommand{\acknowledgementname}{Acknowledgement}
  \providecommand{\claimname}{Claim}
  \providecommand{\conjecturename}{Conjecture}
  \providecommand{\definitionname}{Definition}
  \providecommand{\propositionname}{Proposition}
\providecommand{\theoremname}{Theorem}
\begin{document}

\title{Smooth symmetries of $\times a$-invariant sets}

\author{Michael Hochman}
\maketitle
\begin{abstract}
We\blfootnote{Supported by ERC grant 306494 and ISF grant 1702/17}\blfootnote{\emph{2010 Mathematics Subject Classication}. 28A80, 11K55, 11B30, 11P70}
study the smooth self-maps $f$ of $\times a$-invariant sets $X\subseteq[0,1]$.
Under various assumptions we show that this forces $\log f'(x)/\log a\in\mathbb{Q}$
at many points in $X$. Our method combines scenery flow methods and
equidistribution results in the positive entropy case, where we improve
previous work of the author and Shmerkin, with a new topological variant
of the scenery flow which applies in the zero-entropy case. 
\end{abstract}

\section{Introduction}

For an integer $a\geq2$ let $T_{a}$ denote the self-map of $[0,1]$,
or $\mathbb{R}/\mathbb{Z}$, given by $T_{a}x=ax\bmod1$. Furstenberg
famously proved that, although each of these maps individually admits
a multitude of closed invariant sets, when $a,b$ are non-commensurable,
the only jointly invariant ones $T_{a},T_{b}$ are trivial  \cite[Theorem IV.1]{Furstenberg67}.
Here by trivial we mean either finite or all of $[0,1]$, and we say
that real numbers $a,b\neq0$ are commensurable, and write $a\sim b$,
if $\log|a|/\log|b|\in\mathbb{Q}$; otherwise they are non-commensurable,
denoted $a\not\sim b$.

Furstenberg's theorem has seen many generalizations. The main direction
of generalization has been to commuting actions in algebraic settings:
these include commuting automorphisms of compact abelian groups \cite{Berend1983,Berend1984},
and analogous (though still partial) measure-rigidity results in the
automorphism setting as well as for higher-rank diagonal flows on
homogeneous spaces  \cite{KatokSpatzier1996,EinsiedlerLindenstrauss2003,EinsiedlerKatokLindenstrauss2006}.
Another generalization is to commuting diffeomorphisms on compact
manifolds, see e.g. \cite{KalininKatokHertz2008}. These results are
distinct from the algebraic ones, although they share many common
methods and are related by conjectures predicting that, in many cases,
the only commuting smooth maps are those that are conjugate to algebraic
ones.

This paper deals with another related phenomenon, namely, that if
$X$ is $T_{a}$-invariant and non-trivial, then very few smooth maps
can map it onto (or even into) itself, and in fact, any such map must
locally behave like $T_{a}$, in the sense that it must satisfy $f'\sim a$
for ``most'' $x\in X$. We stress that no assumption of commutation
is made between $T_{a}$ and $f$. We also show that if $Y$ is another
 non-trivial $T_{b}$-invariant set for $b\not\sim a$, then $X$
cannot be mapped to $Y$ by a smooth map. 

Results of this kind have a long history for ``fractal'' sets, sometimes
even in the Lipschitz category. For example, when $X,Y$ are ``deleted-digit''
Cantor sets, i.e. defined by restricting individual digits in non-commensurable
bases $a$ and $b$, Falconer and Marsh showed that there is no bi-Lipschitz
map taking $X$ to $Y$ \cite{FalconerMarsh1992} (they have more
general results too but in another context). More recent work on self-similar
sets with similar flavor appears in the work of Elekes, Keleti and
M\'ath\'e, and of Feng, Huang and Rao \cite{Elekes09,FengHuangRao2014}.
Our results have some overlap with these, but we emphasize that we
deal with much more general sets, including sets of entropy (or dimension)
zero.

Recall that we say a set $X\subseteq[0,1]$ is \emph{non-trivial }if
it is infinite and $X\neq[0,1]$. 
\begin{thm}
\label{thm:joint-invariant-sets}Let $X,Y\subseteq[0,1]$ be non-trivial
and closed sets that are invariant under $T_{a},T_{b},$ respectively,
$a\not\sim b$. Then no $C^{2}$-diffeomorphism of $\mathbb{R}$ or
$\mathbb{R}/\mathbb{Z}$ can map $X$ onto $Y$. 
\end{thm}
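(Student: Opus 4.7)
I would argue by contradiction. Assume a $C^2$-diffeomorphism $f$ of $\mathbb{R}$ or $\mathbb{R}/\mathbb{Z}$ takes $X$ onto $Y$, and conjugate $T_b$ into a smooth local self-map of $X$ by setting $g := f^{-1}\circ T_b\circ f$. On each branch on which $T_b$ is smooth, $g$ is $C^2$, and $g(X)\subseteq X$ because $T_b(Y)=Y$ and $f(X)=Y$. Since $f$ is a $C^2$-diffeomorphism of an interval containing $X$, $\psi := \log f'$ is continuous and bounded on $[0,1]$, and the chain rule telescopes along iterates:
\[
\log(g^n)'(x) = n\log b + \psi(x) - \psi(g^n(x)),\qquad n\ge 1.
\]
In particular $g$ is a smooth self-map of the non-trivial $T_a$-invariant set $X$ whose Lyapunov exponent with respect to any $g$-invariant probability measure is $\log b$.

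I would then apply the paper's main rationality theorem, which controls smooth self-maps of $X$, to each iterate $g^n$. That theorem produces an abundant set $E_n\subseteq X$ (of positive dimension in the positive-entropy regime, or dense/generic in the topological scenery-flow sense in the zero-entropy regime) on which $\log(g^n)'(x)/\log a\in\mathbb{Q}$. Pick $x\in\bigcap_n E_n$ that is moreover generic for some ergodic $g$-invariant Borel probability measure $\mu$ on $X$; such $\mu$ exists by Krylov--Bogolyubov applied to the continuous self-map $g$ of the compact set $X$, and for natural choices of $\mu$ each $E_n$ will have full $\mu$-measure. At such $x$ one has rationals $r_n$ satisfying $r_n\log a = n\log b + \psi(x) - \psi(g^n(x))$; dividing by $n$ and using boundedness of $\psi$ gives $r_n/n \to \log b/\log a$.

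To upgrade this to the required contradiction $\log b/\log a\in\mathbb{Q}$, the plan is to exploit the \emph{constancy} consequence of the scenery-flow (and topological scenery-flow) machinery underlying the main theorem: at $\mu$-almost every $x$, the quantity $r(x):=\log g'(x)/\log a$ should equal a single rational $r_\ast$, because the typical tangent distributions of $\mu$ under the (topological) scenery flow are concentrated on structures with a single rational scaling exponent. Granted this, $r_n(x)=\sum_{k=0}^{n-1} r(g^k x) = n\,r_\ast$ $\mu$-almost everywhere; combined with the telescoping identity this forces $r_\ast = \log b/\log a$, contradicting $a\not\sim b$.

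The main obstacle is precisely this last constancy step. A bare ``pointwise rationality'' conclusion would be too weak, because a convergent sequence of rationals $r_n/n$ can approach any real number, so a naive orbit-average argument cannot on its own pin down $\log b/\log a$. The argument therefore depends essentially on the scenery-flow output that $\log g'(x)/\log a$ is $\mu$-essentially constant. Once that is in hand, the remaining details---handling the finitely many $b$-adic discontinuities of $T_b$, reducing diffeomorphisms of $\mathbb{R}$ or $\mathbb{R}/\mathbb{Z}$ to an interval problem, and verifying that $g$ meets the hypotheses of the main self-map theorem---are routine bookkeeping.
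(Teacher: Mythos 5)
Your proposal takes a genuinely different route from the paper, and it has real gaps.

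\textbf{The route is different.} You try to reduce Theorem~\ref{thm:joint-invariant-sets} to a ``self-map rationality theorem'' by forming $g=f^{-1}\circ T_b\circ f$ and applying the paper's results about smooth self-maps of $X$ (Theorem~\ref{thm:partial-invariance-under-f}) to the iterates $g^n$. The paper's own proof of Theorem~\ref{thm:joint-invariant-sets} does not pass through Theorem~\ref{thm:partial-invariance-under-f} at all. It splits on $\dim X$: if $\dim X>0$, it picks a positive-entropy ergodic measure $\mu$ and invokes the Hochman--Shmerkin equidistribution theorem to show that $f(x)$ has dense $T_b$-orbit for $\mu$-a.e.\ $x$, forcing $Y=[0,1]$ and hence $X=[0,1]$; if $\dim X=0$, it shows $F_{b,X}(X)=[0,1]$ (using $T_a$-invariance of $X$ and the density of $\{n\log_b a\}\bmod 1$), then chains Proposition~\ref{prop:relating-dimFX-and-dimFfX} and Proposition~\ref{prop:relating-dimY-and-dimFY} to get $\dim Y\geq\frac12(1-\bdim X)>0$, contradicting $\dim Y=\dim X=0$.

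\textbf{Gap 1: the reduction to Theorem~\ref{thm:partial-invariance-under-f} is not available.} Every clause of Theorem~\ref{thm:partial-invariance-under-f} carries hypotheses that are absent from Theorem~\ref{thm:joint-invariant-sets}. The main statement needs $X$ perfect, transitive, and $f$ piecewise curved; part~(\ref{transitive-case:minimal}) needs $X$ minimal; part~(\ref{transitive-case:restricted}) needs $X$ self-restricted. In Theorem~\ref{thm:joint-invariant-sets}, $X$ is an arbitrary non-trivial closed $T_a$-invariant set and need satisfy none of these, and the conjugate $g=f^{-1}\circ T_b\circ f$ (or any iterate $g^n$) is not piecewise curved in general: curvedness of $T_b$ is destroyed by conjugation with a generic $C^2$-diffeomorphism, whose second derivative can vanish on a large set. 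So there is no result in the paper that, applied as a black box, produces the ``abundant sets $E_n$'' your plan relies on.

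\textbf{Gap 2: the constancy step is genuinely missing.} You correctly identify that a bare ``$\log(g^n)'(x)/\log a\in\mathbb{Q}$ on a large set'' conclusion is too weak, since $r_n/n\to\log b/\log a$ for rationals $r_n$ imposes no constraint. Your fix is to assert that $\log g'(x)/\log a$ is $\mu$-essentially constant, attributed to scenery-flow tangent-structure rigidity, but this is not proved in the paper, not stated as a lemma anywhere, and is a substantively stronger claim than the pointwise rationality the paper obtains (which is only on a residual set, not $\mu$-a.e., and is not constant). Without it, the telescoping identity gives no contradiction. The paper sidesteps this entirely: in the positive-dimension case it never needs rationality at all (it uses equidistribution to show $Y$ fills the interval), and in the zero-dimension case the argument is purely about dimensions of local difference sets, with no iteration of $g$.

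In short, the telescoping-plus-ergodic-averaging idea is appealing but cannot be closed with what the paper provides; the paper's actual argument avoids both obstacles by a different mechanism.
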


For self-maps of a single $T_{a}$-invariant set we obtain results
under some additional dynamical and regularity assumptions. A set
is\emph{ perfect }if it has no isolated points. If $X$ is $T_{a}$-invariant
then a point $x\in X$ is \emph{transitive }if its orbit $\{T_{a}^{n}x\}_{n=1}^{\infty}$
is dense in $X$, and $X$ is \emph{transitive }if it contains a transitive
point. It is \emph{minimal }if every point is transitive. Minimal
infinite systems are perfect. Adapting Furstenberg's terminology,
we say that $X$ is \emph{self-restricted }if $X-X\neq[0,1]\bmod1$,
which holds in particular when $X$ is minimal (see \cite{Furstenberg67}),
or $\dim X<1/2$. Finally, we say that $f\in C^{1}$ is \emph{piecewise
curved }if $f'$ is piecewise strictly monotone (thus $f$ is locally
strictly concave or convex).
\begin{thm}
\label{thm:partial-invariance-under-f}Let $X\subseteq[0,1]$ be a
closed, perfect, transitive and non-trivial $T_{a}$-invariant set.
Then there is no piecewise-curved $f\in C^{1}$ that is differentiable
on $X$ and maps $X$ onto itself. Furthermore, without assuming curvedness,
we have:

\begin{enumerate}
\item \label{transitive case:affine}An affine map $f$ with $f(X)\subseteq X$
has $f'\sim a$.
\item \label{transitive-case:minimal}If $X$ is minimal and $f\in C^{1}$,
then $f(X)\subseteq X$ implies that $f'(x)\sim a$ for all $x\in X$;
in fact for any open interval $I$, if $I\cap X\neq\emptyset$ then
$f(X\cap I)\subseteq X$ implies $f'(x)\sim a$ for all $x\in X\cap I$.
\item \label{transitive-case:restricted}If $X$ is self-restricted and
$f\in C^{1}$ then $f(X)=X$ implies $f'(x)\sim a$ for all $x$ in
a dense, relatively open subset of $X$. 
\end{enumerate}
In particular, in (\ref{transitive-case:minimal}) and (\ref{transitive-case:restricted}),
if in addition $f$ is real-analytic then $f$ is affine.
\end{thm}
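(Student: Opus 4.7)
I would analyse, for each $x\in X$, the space of \emph{sceneries} of $X$ at $x$---the Hausdorff limits of rescaled pieces $\bigl((X-x)/t_n\bigr)\cap[-1,1]$ as $t_n\to 0$---together with the scaling flow $t\mapsto e^{-s}t$ on scales. Two local observations drive everything. First, $T_a$-invariance of $X$ means that magnifying at $x$ by the factor $a$ is, up to bounded error, the same as magnifying near $T_a x$ by a bounded factor; consequently the transitivity of $(X,T_a)$ translates into a rich orbit structure of the scaling flow on sceneries. Second, if $f\in C^1$ with $f(X)\subseteq X$ and $f'(x_0)=c$, a first-order Taylor expansion yields $\bigl((X-f(x_0))/|c|t\bigr)\cap[-1,1]\approx\mathrm{sgn}(c)\cdot\bigl((X-x_0)/t\bigr)\cap[-1,1]$ for small $t$, so each scenery of $X$ at $x_0$ is sent by the linear map $\times c$ into a scenery of $X$ at $f(x_0)$.

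\textbf{From sceneries to commensurability.} Magnification by $a$ along the $T_a$-orbit shifts sceneries by $\log a$ on the scale parameter, while multiplication by $c$ shifts them by $\log|c|$. If $\log|c|/\log a$ were irrational, $\{c^m a^n:m,n\in\mathbb{Z}\}$ would be dense in $\mathbb{R}^+$ and preserve the scenery family, so a suitable Hausdorff limit would be invariant under a continuous range of rescalings. A rescale-invariant closed subset of $[-1,1]$ must contain an interval; transferring back through the orbit structure, $X$ would contain an interval, contradicting non-triviality. Hence $c\sim a$. This skeleton proves items (1)--(3): (1) is cleanest since $\times c$ is exact and one only needs $X$ to inherit this exact affine invariance alongside its $T_a$-invariance; (2) uses minimality so that sceneries equidistribute at \emph{every} $x\in X$ rather than at a generic set, producing the conclusion pointwise; (3) employs self-restriction precisely at the step where a continuously-rescale-invariant scenery is ruled out, and the local nature of the argument yields a dense relatively open subset of $X$ rather than all of $X$. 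The measure-theoretic input needed in positive entropy is Hochman--Shmerkin's equidistribution of sceneries; in zero entropy it is replaced by the paper's new topological scenery flow.

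\textbf{Piecewise-curved maps and the analytic consequence.} Assume, towards contradiction, that $f$ is piecewise curved, differentiable on $X$, and $f(X)=X$. Choose an open interval $I$ on which $f'$ is strictly monotone and which meets $X$. A transitivity-only version of the scenery argument (applied at the generic, $G_\delta$-dense set of points where sceneries equidistribute) gives $f'(x)\sim a$ for $x$ in an uncountable subset of $X\cap I$; since $X$ has no isolated points, $X\cap I$ is itself perfect and hence uncountable. But $f'\restriction I$ is injective by strict monotonicity, while $\{r:r\sim a\}=\{\pm a^{p/q}\}_{p,q\in\mathbb{Z}}$ is countable---a contradiction. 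The real-analytic clause in (2) and (3) then follows at once: $f'$ takes values in the countable set $\{\pm a^{p/q}\}$ on a dense relatively open subset of $X$, so by continuity of $f'$ and perfectness of $X$ it is locally constant on $X$; a real-analytic function locally constant on a set with accumulation points is globally constant, and $f$ is therefore affine.

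\textbf{Main obstacle.} The principal difficulty is the zero-entropy regime. The Hochman--Shmerkin equidistribution theorem, which drives the magnification-to-commensurability step in positive entropy, has no direct analogue when $X$ carries no invariant measure of positive entropy. Building a purely topological scenery flow out of $T_a$-transitivity, and proving a topological rigidity statement for its orbit closures strong enough to support the incommensurability-to-interval argument of the second paragraph, is, I expect, where essentially all of the real work lies.
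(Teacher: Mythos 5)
Your high-level framing --- analyse the microscopic structure of $X$, observe that $T_a$ shifts the scale parameter by $\log a$ while $f$ shifts it by $\log|f'(x)|$, and use incommensurability to force this structure to be ``full'' --- is exactly the skeleton the paper follows. But your specific choice of local object, the scenery (Hausdorff limits of rescaled pieces $((X-x)/t_n)\cap[-1,1]$), makes the crucial final step invalid. You claim that a scenery invariant under a dense group of rescalings must contain an interval and that ``transferring back through the orbit structure, $X$ would contain an interval.'' The first half is fine, but the transfer is a genuine gap: a tangent set of $X$ containing an interval does not force $X$ to contain an interval (nowhere dense sets can have interval-valued tangents), and indeed the conclusion you seek is stronger than what is true or what is needed. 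You also never say concretely how the self-restricted hypothesis enters in part~(\ref{transitive-case:restricted}), and you invoke Hochman--Shmerkin scenery equidistribution for part~(\ref{transitive-case:minimal}); but infinite minimal $T_a$-systems have zero topological entropy, so that positive-entropy machinery is not available there.

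The paper's new tool sidesteps both problems. In place of sceneries of $X$, it works with the $a$-adic local difference set $F_{a,X}(x)$: all limits of the $a$-adic fractional part $\{x_n-x'_n\}_a$ over distinct pairs $x_n,x'_n\in X$ converging to $x$. This is a closed subset of $\mathbb{R}/\mathbb{Z}$ obeying $F_{a,fX}(fx)=F_{a,X}(x)+\log_a|f'(x)|\bmod 1$, and under $T_a$ it shifts by $-1\bmod 1$ (i.e.\ is preserved). For minimal $X$, semi-continuity makes $F_{a,X}(x)\equiv E$ independent of $x$; if $\alpha=\log_a|f'(x)|$ were irrational then $E=E+\alpha\bmod 1$, forcing $E=[0,1]$. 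The decisive step you are missing is Proposition~\ref{prop:relating-dimY-and-dimFY}: $\{a^{-t}:t\in F_{a,X}(X)\}\subseteq X-X$, so $E=[0,1]$ yields $X-X\supseteq$ an interval, and $T_a$-invariance of $X-X\bmod1$ then gives $X-X=[0,1]\bmod 1$. This is exactly what minimality (via Furstenberg) and the self-restricted hypothesis rule out. So the contradiction lands on $X-X$, not on $X$ containing an interval, and no positive-entropy input is needed for parts~(\ref{transitive-case:minimal}) and~(\ref{transitive-case:restricted}). Your countability argument for the piecewise-curved case and the pigeonhole-plus-analytic-continuation step for the real-analytic clause are essentially right, though the paper applies them only after establishing the dense relatively open set on which $f'\sim a$.
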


We do not know whether $f(X)\subseteq X$ for transitive $X$ implies
similar conclusions in general.

The proofs split into two parts. The main new ingredient of this paper
is a method to handle the case that $X$ is self-restricted (though
actually the main case of interested is the more special case when
its entropy is zero). To explain this part it is useful to recall
Furstenberg's original proof that there are no non-trivial jointly
$T_{a},T_{b}$-invariant sets. For such an $X$, the first observation
is that $X-X\bmod1$ is jointly $T_{a},T_{b}$ invariant (because
both $T_{a}$ and $T_{b}$ are endomorphisms of $\mathbb{R}/\mathbb{Z}$).
On the other hand, if $X$ is infinite, then it has an accumulation
point, whence $0$ is an accumulation point of $X-X$. From $a\not\sim b$
it follows that $\{a^{n}b^{m}\}_{m,n\in\mathbb{N}}$ is non-lacunary,
implying that for every $\varepsilon>0$ there is a $\delta>0$ such
that if $y\in[0,\delta)$ then $\{T_{a}^{i}T_{b}^{j}y\}_{i,j\in\mathbb{N}}=\{a^{i}b^{j}y\bmod1\}_{i,j\in\mathbb{N}}$
is $\varepsilon$-dense. Taken together, this shows that $X-X$ is
$\varepsilon$-dense in $[0,1]$ for every $\varepsilon$, so $X-X=[0,1]$.

In contrast, in our setting the first stage of this argument already
fails: we are assuming that $X$ is invariant under $T_{a}$ and under
another map $f$, and while $X-X$ is still invariant under $T_{a}$,
it is generally not $f$-invariant. The main new ingredient in our
proof is to use a local version of the difference set $X-X$ which
behaves well under smooth maps. This is developed in Section \ref{sec:local-distance-set},
and is motivated by the scenery flow and spectral arguments of \cite{Hochman2010,HochmanShmerkin2015-equidistribution-from-fractal-measures}.
These do not make a formal appearance here, but see the remark at
the end of Section \ref{sec:local-distance-set}.

In the non-restricted case, and specifically when $X$ has positive
topological entropy, the theorems above are proved via analysis of
invariant measures on $X$, adapting scenery flow methods from \cite{Hochman2010,HochmanShmerkin2015-equidistribution-from-fractal-measures}.
This is also the source of the piecewise curvedness requirement.\footnote{After this work was completed, P. Shmerkin \cite{Shmerkin2016} and
M. Wu \cite{Wu2016} independently proved a result on slices of products
of positive-dimension $T_{a}$- and $T_{b}$-invariant sets which
implies that $C^{1}$ is enough in the positive-dimension case of
Theorem \ref{thm:joint-invariant-sets}. The connection with Shmerkin
and Wu's papers is that any $C^{1}$-embedding $f:X\rightarrow Y$
implies that the curve $y=f(x)$ intersects $X\times Y$ in a set
diffeomorphic to $X$, while their results imply that such an intersection
must have dimension at most $\max\{0,\dim X+\dim Y-1\}$, which is
always $<\dim X$ when the latter is positive.} To state our result for positive-entropy measures, let us say that
a probability measure $\mu$ has dimension $t$ if $\mu(E)<1$ for
all Borel sets $E$ with $\dim E<t$, but it is supported on some
set of dimension $t$. We write $f\mu=\mu\circ f^{-1}$ for the push-forward
of $\mu$ by a map $f$, and note that when $f$ is bi-Lipschitz,
$\mu$ and $f\mu$ have the same dimension. Our result for measures
is the following:\footnote{We recently learned of results by Eskin, related to work of Brown
and Rodriguez-Hertz, which shows that certain ``general position'',
expanding-on-average pairs of diffeomorphisms of a compact manifold
can preserve nothing but Lebesgue measure. Their methods do not appear
to apply in our setting, where both the expansion and invertability
are absent.} 
\begin{thm}
\label{thm:measures}For every $T_{a}$-ergodic measure $\mu$ of
dimension $0<s<1$, there exists $\varepsilon=\varepsilon(\mu)$ such
that the following holds. For every piecewise curved $f$, every weak-{*}
accumulation point of the sequence $\frac{1}{N}\sum_{n=0}^{N-1}T_{a}^{n}(f\mu)$
has dimension at least $s+\varepsilon$.
\end{thm}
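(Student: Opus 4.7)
The plan is to analyze any weak-* accumulation point $\nu$ of the Ces\`aro averages via the scenery-flow / CP-distribution framework developed in \cite{Hochman2010, HochmanShmerkin2015-equidistribution-from-fractal-measures}, exploiting the fact that piecewise curvedness forces $f'$ to vary and thereby injects a non-trivial distribution of log-scales into the magnifications of $f\mu$. Any such $\nu$ is automatically $T_a$-invariant, and since $f$ is locally bi-Lipschitz on each piece of monotonicity of $f'$, $f\mu$ still has dimension $s$; the task is precisely to show that the $T_a$-averaging strictly boosts this dimension by an amount depending only on $\mu$.

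First I would recall the CP-distribution $P_\mu$ of $\mu$: at a $\mu$-typical point $x$, rescale $\mu$ restricted to the depth-$n$ base-$a$ cell containing $x$ back to unit scale, and take the distributional limit of these magnifications as $n\to\infty$. Scenery-flow ergodic theory gives that $P_\mu$-typical magnifications have dimension $s$. Next I would analyze $P_{f\mu}$: at scale $a^{-n}$, the magnification of $f\mu$ around $f(x)$ agrees, up to a $C^1$-small error, with the magnification of $\mu$ around $x$ composed with the affine rescaling of slope $f'(x)$; after the normalizing dilation by $a^n$, the residual multiplicative factor lies in $\log|f'(x)|/\log a \bmod 1$. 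Thus $P_{f\mu}$ is the convolution, in the log-scale direction, of $P_\mu$ with the push-forward $\theta$ of $\mu$ under $x\mapsto \log|f'(x)|/\log a \bmod 1$. Piecewise curvedness makes $x\mapsto f'(x)$ piecewise strictly monotone, so $\theta$ is non-atomic on each piece of positive $\mu$-mass.

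Next, by the ergodic theorem applied to the scenery / magnification flow, $\nu$ is represented in entropic terms as an integral against $P_{f\mu}$, so the base-$a$ entropies of $\nu$ at fine scales are bounded below by the expected entropies under $P_{f\mu}$. The crucial quantitative input is the ``convolutions strictly increase entropy'' theorem of \cite{HochmanShmerkin2015-equidistribution-from-fractal-measures}: convolving a dimension-$s$ object with $s<1$ by a measure whose log-scale marginal is non-atomic produces a gain $\varepsilon=\varepsilon(\mu)>0$ that is uniform over all convolving measures meeting a fixed non-atomicity quantifier. Applying this to $P_\mu$ and $\theta$ yields $\dim\nu \geq s+\varepsilon$.

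The main technical obstacle is justifying the scenery-flow linearization in the $C^1$ category: at typical points the approximation of $f$ by $y\mapsto f(x)+f'(x)(y-x)$ has error $o(|y-x|)$, and one must argue that this error remains negligible after rescaling by $a^n$ and passage to weak-* limits. Piecewise strict monotonicity of $f'$, rather than mere continuity, should give enough regularity to push the argument through on generic cells, away from the finitely many pieces' endpoints. A secondary obstacle is the uniformity of $\varepsilon$ in $f$: the HS2015 entropy-boost supplies a gain depending only on a non-atomicity budget for $\theta$, so uniformity reduces to bounding this budget away from degeneration using the piecewise-curved hypothesis together with the non-atomicity of $\mu$ provided by its positive dimension.
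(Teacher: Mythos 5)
Your overall strategy matches the paper: represent the accumulation point via the scenery/CP flow generated by $\mu$ together with the log-derivative distribution of $f$, and then invoke an entropy-of-convolutions theorem to get a uniform dimension gain. The representation you describe of $\nu$ as an $S$-convolution of the EFD with a measure $\theta$ built from $x\mapsto\log|f'(x)|$ corresponds to the paper's formula $\nu'=\int\pi(\theta_\eta*\eta)\,dP(\eta)$. But there are two genuine gaps in how you close the argument.

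First, non-atomicity of $\theta$ is not the right hypothesis, and will not give the uniformity the theorem requires. In the inverse-theorem framework underlying the entropy-of-convolutions results, a non-atomic measure of Hausdorff dimension zero is still ``approximately atomic'' at asymptotically full density of scales, so the asymptotic (dimension-level) entropy increase it produces can be zero, and in any case the finite-scale gain depends on the modulus of non-atomicity, which is a property of $f$. You acknowledge this as a ``secondary obstacle'' but do not resolve it. The paper's resolution is precisely the point you are missing: because $f$ is piecewise curved, $f'$ is injective on each monotonicity piece and (in the paper's phrasing) locally bi-Lipschitz, so the push-forward of $\mu$ under $f'$ has the \emph{same} dimension $s$; therefore $\dim\theta\geq s$, and this lower bound depends only on $\mu$. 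The $\varepsilon(\mu)$ then comes out uniform over $f$ because the only inputs to the convolution theorem are $\dim\theta\geq s$ and the entropy porosity of $P$-typical measures, both controlled by $\mu$ alone.

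Second, you cite the wrong key lemma and state it for the wrong kind of convolution. What is needed is not an HS2015 ``convolutions increase entropy'' statement on $\mathbb{R}/\mathbb{Z}$, but the affine-group version: the paper invokes Theorem~9 of ``Some problems on the boundary of fractal geometry and additive combinatorics,'' applied to $\theta_\eta*\eta$ where $\theta_\eta$ is a measure on the group of affine maps encoding both the scaling $f'(x)$ and the translation. That theorem requires $\theta_\eta$ to have positive dimension and $\eta$ to be entropy porous; the latter comes from the fact that $X$ is a porous set (because $\dim X<1$ and $X$ is $T_a$-invariant), which is a stronger and more specific fact than your ``dimension-$s$ object with $s<1$.'' Your plan is heading in the right direction, but without the $\dim\theta\geq s$ bound and the correct (affine-group, porosity-quantified) convolution theorem it does not yield a gain $\varepsilon$ depending only on $\mu$.
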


Combined with the variational principle, this implies that if $X$
is a non-trivial $T_{a}$-invariant set with positive dimension, then
$f(X)\subseteq X$ is impossible for a piecewise curved $f$. Thus
for a piecewise curved function $f$, every jointly $f$- and $T_{a}$-invariant
ergodic probability measure is either Lebesgue or has dimension zero,
and every jointly invariant set is either $[0,1]$ or has dimension
$0$. This and the other results stated earlier make the following
conjecture seems reasonable:
\begin{conjecture}
Fix $T_{a}$ and let $f\in C^{\omega}(\mathbb{R})$ or $f\in C^{\omega}(\mathbb{R}/\mathbb{Z})$,
and assume that $f$ is not affine. Then every jointly $T_{a}$- and
$f$-invariant set is trivial.
\end{conjecture}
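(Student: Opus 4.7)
The plan is to combine Theorems \ref{thm:measures} and \ref{thm:partial-invariance-under-f} with a real-analytic rigidity argument, splitting on $\dim X > 0$ versus $\dim X = 0$. The preliminary observation is that a non-affine $f \in C^{\omega}$ has $f''$ real-analytic and not identically zero, hence $f''$ has only isolated zeros; consequently $f'$ is piecewise strictly monotone, so $f$ is piecewise curved and the piecewise-curved hypotheses of both theorems apply. Let $X$ be a non-trivial closed set with $T_a X \subseteq X$ and $f(X) \subseteq X$; since both $T_a$ and $f$ are continuous, the perfect kernel of $X$ is still jointly invariant, so we may assume $X$ is perfect and therefore uncountable.

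\emph{Positive-dimension case.} Suppose $\dim X > 0$. The closed $T_a$-invariant set $X$ corresponds via base-$a$ coding to a subshift of $\{0,\ldots,a-1\}^{\mathbb{N}}$, which carries a measure of maximal entropy; pushed down, this gives a $T_a$-ergodic measure $\mu$ on $X$ with $\dim \mu = h(\mu)/\log a = h_{\mathrm{top}}(T_a|_X)/\log a = \dim X$. Theorem \ref{thm:measures} then supplies $\varepsilon = \varepsilon(\mu) > 0$ such that every weak-{*} accumulation point $\nu$ of $\frac{1}{N}\sum_{n=0}^{N-1} T_a^n(f\mu)$ has $\dim \nu \geq \dim X + \varepsilon$. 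But $f(X)\subseteq X$ and $T_a X \subseteq X$ force $\nu$ to be supported on $X$, giving $\dim \nu \leq \dim X$ --- a contradiction. Hence $\dim X = 0$.

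\emph{Zero-dimension case.} Since $\dim X = 0 < 1/2$, $X$ is self-restricted. Assume for the moment that one can extract a $T_a$-minimal subset $M \subseteq X$ satisfying $f(M) \subseteq M$. By Theorem \ref{thm:partial-invariance-under-f}(\ref{transitive-case:minimal}), $r(x) := \log|f'(x)|/\log a$ lies in $\mathbb{Q}$ at every $x \in M$ where $f'(x) \neq 0$ (the zero set of $f'$ is discrete since $f$ is non-constant). Being minimal and infinite, $M$ is perfect and uncountable, while $\{f'=0\}$ is countable, so $M \setminus \{f'=0\}$ is uncountable. By pigeonhole there is a single $q\in\mathbb{Q}$ with $r^{-1}(q) \cap M$ uncountable; this preimage has a limit point $x_*$, at which $f'(x_*)\neq 0$ (else $r(x_n)\to -\infty$ along $x_n\to x_*$, contradicting $r(x_n)=q$). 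Real-analyticity of $r$ near $x_*$ then forces $r \equiv q$ on the connected component $I$ of $\{f'\neq 0\}$ containing $x_*$; i.e., $|f'| \equiv a^q$ on the open interval $I$. Analytic continuation of $f'$ extends this to all of $\mathbb{R}$ (or $\mathbb{R}/\mathbb{Z}$), so $f$ is affine --- contradicting the hypothesis.

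\emph{Main obstacle.} The sticking point is the extraction of $M$. A $T_a$-minimal subset of $X$ exists by Zorn's lemma but need not be $f$-invariant; conversely, $\bigcap_{n \geq 0} f^n(X)$ is $f$-surjective but need not be $T_a$-invariant, since $T_a$ and $f$ do not commute. One approach is a joint Zorn-type argument on closed subsets of $X$ simultaneously invariant under both dynamics, combined with a structure theorem for minimal elements showing they are $T_a$-transitive. Closer in spirit to the present paper would be to extend the local distance set technology of Section \ref{sec:local-distance-set} to the setting $f(X) \subseteq X$ without surjectivity or transitivity hypotheses, using real-analyticity of $f$ to propagate rigidity in the absence of two-sided $f$-orbits. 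This second route is, in my estimation, the hard part.
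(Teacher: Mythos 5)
This statement is a \emph{Conjecture}: the paper explicitly poses it as an open problem motivated by Theorems \ref{thm:joint-invariant-sets}--\ref{thm:measures} and does not prove it. So there is no proof in the paper to compare against, and you should not expect your argument to close without new ideas.

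Your proposal is honestly incomplete, and you correctly locate the gap. The positive-dimension half is fine and mirrors what the paper actually does prove (the $\dim X>0$ case of the main statement of Theorem \ref{thm:partial-invariance-under-f}); note, though, that Theorem \ref{thm:measures} as stated requires $\dim\mu<1$, so you must also rule out $\dim X=1$, which is easy since a closed $T_a$-invariant set of full dimension is all of $[0,1]$. The zero-dimension half is where the conjecture is genuinely open, and the obstruction you name --- that a $T_a$-minimal subset of $X$ produced by Zorn need not be $f$-invariant, and that $\bigcap_n f^n(X)$ need not be $T_a$-invariant because $T_a$ and $f$ do not commute --- is exactly the issue. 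Theorem \ref{thm:partial-invariance-under-f}(\ref{transitive-case:minimal}) needs minimality, and part (\ref{transitive-case:restricted}) needs $f(X)=X$ (surjectivity) plus transitivity; the conjecture provides neither, and there is no known device for manufacturing them from $T_aX\subseteq X$, $f(X)\subseteq X$ alone. Your suggested second route (extending the local difference set machinery to handle $f(X)\subseteq X$ with no transitivity or surjectivity) is in the right spirit but is precisely the missing content. A smaller gap worth flagging: the perfect-kernel reduction at the start silently discards the case where $X$ is countably infinite, since then the perfect kernel is empty (and hence trivial) while $X$ itself is non-trivial; countable closed $T_a$-invariant sets exist (e.g.\ $\{0\}\cup\{a^{-n}:n\geq0\}$), so this case would need a separate, presumably direct, analytic argument.
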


Of course one could also ask this for $f$ with less smoothness, or
make the same conjecture for measures.

The paper is organized as follows. We begin with the topological analysis
of the zero entropy (or self-restricted) case: In Section \ref{sec:local-distance-set}
we define the local difference (or distance) set and discuss its properties,
in Section \ref{sec:Dimension} we discuss dimension and the relation
between the local difference set and the original set, and in Sections
\ref{sec:Proof-of-joint-invariance} and \ref{sec:Proof-of-partial-invariance}
we prove Theorem \ref{thm:joint-invariant-sets} and (most of) Theorem
\ref{thm:partial-invariance-under-f}. The last section is devoted
to Theorem \ref{thm:measures} and completing the proof of Theorem
\ref{thm:partial-invariance-under-f} (\ref{transitive case:affine}).
\begin{acknowledgement*}
I am grateful to J.-P. Thouvenot for encouraging me to revisit these
questions. I also thank Amir Algom and the anonymous referee for their
comments on an early version of the paper.
\end{acknowledgement*}

\section{\label{sec:local-distance-set}Localizing the difference set }

As explained in the introduction, we require a ``localized'' version
of the difference set $X-X$. For this, define the $a$-adic fractional
part of $s>0$ by 
\[
\{s\}_{a}=-\log_{a}s\bmod1
\]
i.e. if $0\leq t<1$ and $s=a^{-(k+t)}$ for $k\in\mathbb{N}$, then
$\{s\}_{a}=t$. For $s<0$ define $\{s\}_{a}=\{-s\}_{a}$.
\begin{defn}
\label{def:local-distance-set}Let $X\subseteq\mathbb{R}$. For $2\leq a\in\mathbb{N}$
and $x\in\mathbb{R}$ the $a$\emph{-adic local }difference\emph{
set }of $X$ at $x$ is the set 
\[
F_{a,X}(x)=\left\{ t\in\mathbb{R}/\mathbb{Z}\,\left|\begin{array}{c}
\exists x_{n},x'_{n}\in X\mbox{ s.t. }(x_{n},x'_{n})\rightarrow(x,x)\\
x_{n}\neq x'_{n}\mbox{ and }\{x_{n}-x'_{n}\}_{a}\rightarrow t
\end{array}\right.\}\right\} 
\]

For $X\subseteq\mathbb{R}/\mathbb{Z}$ we define $F_{a,X}(x)=F_{a,Y}(y)$
where $x=y\bmod1$ and $X=Y\bmod1$, and the lift $Y$ is chosen so
that reduction modulo $1$ is a bijection of neighborhoods of $x$
and $y$. When $X$ is $T_{a}$-invariant, we view $X$ as a subset
of $\mathbb{R}/\mathbb{Z}$ when defining $F_{a,X}(x)$, even if $X$
is initially given as a subset of $[0,1]$. This convention potentially
enlarges $F_{a,X}(0)$ when $0,1\in X$.
\end{defn}

This defines a function $F_{a,X}:X\rightarrow\{\mbox{subsets of }\mathbb{R}/\mathbb{Z}\}$,
but we sometimes think of the range as subsets of $[0,1]$, and make
the identification whenever convenient. It will be convenient to write
\[
F_{a,X}(X)=\bigcup_{x\in X}F_{a,X}(x)
\]

We state, mostly without proof, some elementary properties of $F_{a,X}(x)$.
\begin{enumerate}
\item (Locality): $F_{a,X}(x)=F_{a,X\cap B_{r}(x)}(x)$ for any $r>0$.
\item (Monotonicity): $Y\subseteq X$ implies $F_{a,Y}(x)\subseteq F_{a,X}(x)$. 
\item (Unions) If $X=\bigcup X_{i}$ then $F_{a,X}(X)\supseteq\bigcup F_{a,X_{i}}(X_{i})$.
If the $X_{i}$ are closed and disjoint then $F_{a,X}(X)=\bigcup F_{a,X_{i}}(X_{i})$.
\item (Non-triviality): $F_{a,X}(x)\neq\emptyset$ if and only if $x$ is
an accumulation point of $X$.
\item (Closure): $F_{a,X}(x)$ is closed for all $x$.
\item (Semi-continuity): If $x_{n}\rightarrow x$ and $t_{n}\in F_{a,X}(x_{n})$,
and if $t_{n}\rightarrow t$ in $\mathbb{R}/\mathbb{Z}$, then $t\in F_{a,X}(x)$
(equivalently, in the space of compact subsets of $\mathbb{R}/\mathbb{Z}$,
any sub-sequential limit $E$ of $F_{a,X}(x_{n})$ in the Hausdorff
metric satisfies $E\subseteq F_{a,X}(x)$).

In particular, if $X$ is compact, then $F_{a,X}(X)=\bigcup_{x\in X}F_{a,X}(x)$
is closed. 
\item \label{enu:eigenfunc}(Linear transformation): For $X\subseteq\mathbb{R}$
and any $0\neq b\in\mathbb{R}$, we have\footnote{We use the usual notation for arithmetic operations between sets:
for $u\in\mathbb{R}$ and $W\subseteq\mathbb{R}$, $uW=\{uw\,:\,w\in W\}$
and $W+u=\{w+u\,:\,w\in W\}$.} 
\[
F_{a,bX}(bx)=F_{a,X}(x)-\log_{a}|b|\bmod1
\]
\item ($T_{b}$-transformation): For $X\subseteq\mathbb{R}/\mathbb{Z}$
and $b\in\mathbb{N}$, 
\[
F_{a,T_{b}X}(T_{b}x)\supseteq F_{a,X}(x)-\log_{a}b\bmod1
\]
Indeed, for small $r>0$ the map $T_{b}$ acts on $B_{r}(x)$ as an
affine map of expansion $b$, and sends $X\cap B_{r}(x)$ into a subset
of $T_{b}X\cap B_{br}(T_{b}x)$. The inclusion above follows by locality
and monotonicity.

For $X\subseteq[0,1]$, the same holds except possibly at points $x\in\frac{1}{b}\mathbb{Z}$,
where $T_{b}$ is discontinuous. 
\item ($C^{1}$-transformation): If $f:\mathbb{R}\rightarrow\mathbb{R}$
is a $C^{1}$-diffeomorphism, then 
\[
F_{a,fX}(fx)=F_{a,X}(x)+\log_{a}|f'(x)|\bmod1
\]
Indeed, for $t\in F_{a,X}(x)$ let $x_{n},x'_{n}\in X$ converge to
$x$ and satisfy $\{x_{n}-x'_{n}\}_{a}\rightarrow t$. Then by calculus,
$f(x_{n})-f(x'_{n})=f'(\xi_{n})\cdot(x_{n}-x'_{n})$ for some point
$\xi_{n}$ intermediate between $x_{n},x'_{n}$, so $\{f(x_{n})-f(x'_{n})\}_{a}=\{x_{n}-x'_{n}\}_{a}-\log_{a}|f'(\xi_{n})|$.
Since $x_{n},x'_{n}$ converge to $x$, also $\xi_{n}\rightarrow x$,
so we have $f'(\xi_{n})\rightarrow f'(x)$ and $t+\log_{a}f'(x)=\lim\{f(x_{n})-f(x'_{n})\}_{a}\in F_{a,f(X)}(f(x))$.
The converse inclusion is shown similarly by considering $f^{-1}$. 
\end{enumerate}
Definition \ref{def:local-distance-set} is motivated by the spectral
analysis of the scenery flow of $\times a$-invariant measures that
was carried out in \cite{Hochman2010}. Indeed, property (\ref{enu:eigenfunc})
means that the function $t\mapsto F_{a,tX}(0)$ is periodic (in $\log t$),
and so defines an eigenfunction for the scenery flow (with values
in the space of closed subsets of $\mathbb{R}/\mathbb{Z}$).

\section{\label{sec:Dimension}Dimension}

We write $\dim X$ for the Hausdorff dimension of a set $X\subseteq\mathbb{R}$
or $\mathbb{R}/\mathbb{Z}$, and $\bdim X$ for its box dimension,
defined as the limit as $r\searrow0$ of $\log N(X,r)/\log(1/r)$,
$N(X,r)$ is the minimal number of $r$-balls needed to cover $X$
(in the cases we apply this, the limit exists). We have the following
standard facts.
\begin{enumerate}
\item $\bdim$ is stable under finite unions: $\bdim\bigcup_{i=1}^{n}X_{i}=\max_{1\leq i\leq n}\bdim X_{i}$.
\item $\dim$ and $\bdim$ are non-decreasing under Lipschitz maps. 
\item $\dim(X\times Y)\leq\dim X+\bdim Y$.
\item $\dim(X\times Y)\geq\dim X+\dim Y$, with equality if $\dim X=\bdim X$.
\item If $X$ is closed and $T_{a}$-invariant then $\bdim X$ exists and
$\bdim X=\dim X=h_{top}(X,T_{a})/\log a$, where $h_{top}(X,T_{a})$
is the topological entropy of $(X,T_{a})$ \cite[Section III]{Furstenberg67}.
\end{enumerate}
It follows from these properties that if $Y\subseteq[0,1]$ is closed
and $T_{a}$-invariant, then $\dim X\times Y=\dim X+\dim Y$ for all
sets $X$.
\begin{prop}
\label{prop:relating-dimY-and-dimFY}If $Y\subseteq[0,1]$ is closed
and $T_{b}$-invariant, then $\{b^{-t}\,:\,t\in F_{b,Y}(Y)\}\subseteq Y-Y\bmod1$
and $\dim F_{b,Y}(Y)\leq2\dim Y$.
\end{prop}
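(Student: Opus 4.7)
The proof splits into a structural inclusion (the main content) and a dimension chase that follows routinely. For the first assertion, the key idea is renormalization: given pairs $y_n,y'_n \in Y$ witnessing $t \in F_{b,Y}(Y)$, their separation has a prescribed size, and applying a suitably chosen power $T_b^{k_n}$ rescales this separation to something of magnitude $b^{-t}$ while keeping the image pair inside $Y$ by $T_b$-invariance.

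Concretely, fix $t \in F_{b,Y}(Y)$ and pick $y_n, y'_n \in Y$ with $y_n, y'_n \to y \in Y$, $y_n \neq y'_n$, and $\{y_n - y'_n\}_b \to t$ in $\mathbb{R}/\mathbb{Z}$. Write $y_n - y'_n = \varepsilon_n b^{-s_n}$ with $\varepsilon_n \in \{\pm 1\}$, $s_n \to \infty$, and $s_n \equiv t \pmod 1$, then choose integers $k_n \to \infty$ with $s_n - k_n \to t$ in $\mathbb{R}$. Viewing $Y$ as a $T_b$-invariant subset of $\mathbb{R}/\mathbb{Z}$, the points $z_n := T_b^{k_n} y_n$ and $z'_n := T_b^{k_n} y'_n$ lie in $Y$, and
\[
z_n - z'_n \equiv b^{k_n}(y_n - y'_n) = \varepsilon_n b^{k_n - s_n} \pmod 1,
\]
which converges to $\pm b^{-t} \pmod 1$. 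By compactness of $Y \subseteq \mathbb{R}/\mathbb{Z}$, pass to a subsequence with $z_n \to z \in Y$ and $z'_n \to z' \in Y$; then $\pm b^{-t} \equiv z - z' \in Y - Y \pmod 1$, and since $Y - Y \bmod 1$ is symmetric in $\mathbb{R}/\mathbb{Z}$, we conclude $b^{-t} \in Y - Y \bmod 1$.

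The dimension bound follows by chaining three standard facts. The map $\phi(t) = b^{-t} \bmod 1$ is bi-Lipschitz on $[0,1)$, since its derivative has magnitude between $(\ln b)/b$ and $\ln b$; together with the inclusion just proved, this gives $\dim F_{b,Y}(Y) \leq \dim(Y - Y \bmod 1)$. Next, $Y - Y \bmod 1$ is the Lipschitz image of $Y \times Y$ under $(y,y') \mapsto y - y' \bmod 1$, so $\dim(Y - Y \bmod 1) \leq \dim(Y \times Y)$. Finally, since $Y$ is closed and $T_b$-invariant we have $\bdim Y = \dim Y$, so the product inequality recalled at the start of the section yields $\dim(Y \times Y) \leq \dim Y + \bdim Y = 2\dim Y$.

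The only step requiring thought is the choice of the renormalization exponent $k_n$; once one commits to rescaling so that $b^{k_n}(y_n - y'_n)$ has a well-defined limit of magnitude $b^{-t}$, the compactness and symmetry arguments, and the subsequent dimension chase, are essentially automatic.
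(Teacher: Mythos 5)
Your proof is correct and follows essentially the same route as the paper: the renormalization by $T_b^{k_n}$ to turn the scale exponent into a genuine difference in $Y$, followed by the bi-Lipschitz/Lipschitz dimension chase through $Y-Y$ and $Y\times Y$, with $\bdim Y=\dim Y$ used at the last step. One small slip in wording: you write ``$s_n\equiv t\pmod 1$,'' but the hypothesis only gives $\{y_n-y'_n\}_b\to t$, i.e. $s_n\bmod 1\to t$ in $\mathbb{R}/\mathbb{Z}$; this is harmless because your choice of $k_n$ (nearest integer to $s_n$) still yields $s_n-k_n\to t$ in $\mathbb{R}$, and in particular quietly covers the $t=0$ case that the paper dispatches separately by noting $b^0\equiv 0\in Y-Y$.
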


\begin{proof}
We can assume that $F_{b,Y}(Y)\neq\emptyset$ (i.e. $Y$ is infinite),
since otherwise the statement is trivial. Let $t\in F_{b,Y}(Y)$.
We can assume that $t\neq0$, since if it is then $b^{0}\in Y-Y\bmod1$
trivially. By definition, there exist points $y'_{n},y''_{n}\in Y$
such that $\{y'_{n}-y''_{n}\}_{b}\rightarrow t$. This means that
$\log_{b}(y'_{n}-y''_{n})\rightarrow t\bmod1$. Assuming without loss
of generality that $y'_{n}\geq y''_{n}$. there exist $k_{n}\in\mathbb{N}$
and $0\leq t_{n}<1$ such that $y'_{n}-y''_{n}=b^{-k_{n}-t_{n}}$
and $t_{n}\rightarrow t\bmod1$. Since $t\neq0$, this means convergence
is also in $\mathbb{R}$ once we identify $t_{n},t$ with elements
of $[0,1)$, which we do henceforth. Since $y'_{n}-y''_{n}<b^{-k_{n}}$,
we conclude that $T_{b}^{k_{n}}y''_{n}-T_{b}^{k_{n}}y'_{n}=b^{-t_{n}}\rightarrow b^{-t}$.
Passing to a subsequence we can assume that $T_{b}^{k_{n}}y'_{n}\rightarrow y'$
and $T_{b}^{k_{n}}y''_{n}\rightarrow y''$, hence $b^{-t}=y''-y'\in Y-Y$. 

For the second statement, the map $t\mapsto b^{-t}$ from $F_{b,Y}(Y)$
into $Y-Y$ is bi-Lipschitz to its image, so it preserves dimension.
In particular its image, which is a subset of $Y-Y$, has the same
dimension as $F_{b,Y}(Y)$. This gives $\dim F_{b,Y}(Y)\leq\dim(Y-Y)$.
Since $Y-Y$ is the image of $Y\times Y$ under the Lipschitz map
$(x,y)\mapsto x-y$, we have 
\[
\dim F_{b,Y}(Y)\leq\dim(Y\times Y)=2\dim Y\qedhere
\]
\end{proof}
A real function $f$ is to piecewise satisfy a property $P$ if there
is a discrete set $E$ such that for all $x\in\dom f\setminus E$,
the function satisfies $P$ in some neighborhood of $x$.
\begin{prop}
\label{prop:relating-dimFX-and-dimFfX}Let $X\subseteq\mathbb{R}$,
let $f\in C^{2}(\mathbb{R})$ be a diffeomorphism, and $Y=f(X)$.
Then $\dim F_{b,Y}(Y)\geq\dim F_{b,X}(X)-\bdim X$. The same holds
if $f$ is a piecewise $C^{2}$-diffeomorphism that is differentiable
on $X$.
\end{prop}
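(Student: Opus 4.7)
The plan is to exploit the $C^1$-transformation property (item~(8) of Section~\ref{sec:local-distance-set}), which says that $F_{b,fX}(f(x)) = F_{b,X}(x) + \log_b|f'(x)| \bmod 1$ whenever $f$ is a $C^1$-diffeomorphism. The idea is to assemble these pointwise translations into a single bi-Lipschitz map between ``total spaces'' and then invoke the product-dimension inequality from Section~\ref{sec:Dimension}.

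First I would reduce to the case that $X$ is bounded (Hausdorff dimension is countably stable, and when $\bdim X = \infty$ the conclusion is vacuous). Consider the ``graph''
\[
A = \{(x,t) \in X \times \mathbb{R}/\mathbb{Z} : t \in F_{b,X}(x)\}.
\]
The coordinate projection $\pi_2 : A \to \mathbb{R}/\mathbb{Z}$ is $1$-Lipschitz with image $F_{b,X}(X)$, giving the lower bound $\dim A \geq \dim F_{b,X}(X)$.

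Next, define $\phi : A \to X \times \mathbb{R}/\mathbb{Z}$ by $\phi(x,t) = (x,\, t + \log_b|f'(x)| \bmod 1)$. By property~(8) applied at each $x \in X$, one has $\phi(A) \subseteq X \times F_{b,Y}(Y)$. Since $f \in C^2$ is a diffeomorphism, $f'$ is $C^1$ and nowhere zero on the bounded set $X$, so $\log_b|f'|$ is Lipschitz there; thus $\phi$ and its inverse $(x,s) \mapsto (x, s - \log_b|f'(x)|)$ are both Lipschitz, so $\phi$ is bi-Lipschitz and $\dim A = \dim \phi(A)$. Applying the product-dimension inequality $\dim(U \times V) \leq \dim U + \bdim V$ from Section~\ref{sec:Dimension} with $U = F_{b,Y}(Y)$ and $V = X$ (noting that $X \times F_{b,Y}(Y)$ is isometric to $F_{b,Y}(Y) \times X$) yields
\[
\dim F_{b,X}(X) \leq \dim A = \dim \phi(A) \leq \dim\bigl(X \times F_{b,Y}(Y)\bigr) \leq \dim F_{b,Y}(Y) + \bdim X,
\]
which is the desired bound.

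For the piecewise case I would decompose $X$ according to the discrete singular set $E$ of $f$, run the $C^2$ argument on each piece of $X \setminus E$, and handle the countably many points of $X \cap E$ separately; since $X \cap E$ is countable, its contribution to $F_{b,X}(X)$ is a countable union of subsets of $\mathbb{R}/\mathbb{Z}$ and so has Hausdorff dimension at most $1$, which is absorbed harmlessly into $\bdim X$ (the interesting range being $\bdim X < 1$). I expect the main obstacle to be justifying a version of property~(8) at points $x \in X \cap E$ where $f$ is only differentiable (not $C^2$) and sequences $x_n, x_n' \in X$ defining $F_{b,X}(x)$ may straddle a piece boundary: this requires a careful mean-value argument, but the conclusion still goes through thanks to the discreteness of $E$.
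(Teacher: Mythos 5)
Your argument for the $C^{2}$ case is correct and reaches the same bound by an approach closely parallel to the paper's, but with the roles of $X$ and $Y$ interchanged. The paper works with $g=f^{-1}$, forms $Z=\bigcup_{y\in Y}F_{b,Y}(y)\times\{\log_{b}|g'(y)|\}\subseteq F_{b,Y}(Y)\times\mathbb{R}$, recovers $F_{b,X}(X)$ as the Lipschitz image $\pi(Z)$ with $\pi(t,u)=t-u\bmod 1$, and then bounds $\dim Z\leq\dim F_{b,Y}(Y)+\bdim g'(Y)\leq\dim F_{b,Y}(Y)+\bdim Y=\dim F_{b,Y}(Y)+\bdim X$. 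You instead keep $x$ as an explicit coordinate, form $A\subseteq X\times\mathbb{R}/\mathbb{Z}$, apply a bi-Lipschitz shear $\phi$ landing in $X\times F_{b,Y}(Y)$, and invoke the product inequality directly. The $C^{2}$ hypothesis enters at the same point in both (Lipschitz regularity of $\log_{b}|f'|$, equivalently of $\log_{b}|g'|$), so neither version is more economical; they are two bookkeepings of the same idea and both are fine.

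The piecewise case is where there is a genuine gap in your sketch. The paper's decomposition argument tacitly chooses the partition so that the endpoints (the non-$C^{2}$ singular set $E$) lie outside $X$, after which the $C^{2}$ case applies to each closed piece and one takes maxima. You correctly flag that $E\cap X$ may be nonempty, but your patch does not work: observing that $\bigcup_{x\in X\cap E}F_{b,X}(x)$ has Hausdorff dimension at most $1$ gives no lower bound on $\dim F_{b,Y}(Y)$. If $\bdim X$ is small while this union has dimension close to $1$, the claimed inequality would force $\dim F_{b,Y}(Y)$ to be close to $1$, and nothing in your argument produces that. What you would actually need at $x\in X\cap E$ is the inclusion $F_{b,X}(x)+\log_{b}|f'(x)|\subseteq F_{b,Y}(f(x))$, and, as you yourself note, the required limit $\frac{f(x_{n})-f(x'_{n})}{x_{n}-x'_{n}}\to f'(x)$ for $x_{n},x'_{n}\to x$ in $X$ is strictly stronger than differentiability at $x$. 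So either one should assume (as the paper implicitly does) that the singular set avoids $X$, or one must actually prove strict differentiability at $E\cap X$; waving at $\bdim X$ does not close the gap.
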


\begin{proof}
Let $g=f^{-1}:Y\rightarrow X$ and 
\[
Z=\bigcup_{y\in Y}\left(F_{b,Y}(y)\times\{\log_{b}|g'(y)|\}\right)\subseteq F_{b,Y}(Y)\times\mathbb{R}
\]
Writing $\pi(y,t)=y-t\bmod1$, we have $F_{b,X}(X)=\pi(Z)$, and since
$\pi$ is Lipschitz, 
\[
\dim F_{b,X}(X)\leq\dim Z
\]
On the other hand $Z\subseteq F_{b,Y}(Y)\times\log_{b}|g'(Y)|$, so
\begin{eqnarray*}
\dim Z & \leq & \dim F_{b,Y}(Y)+\bdim g'(Y)\\
 & \leq & \dim F_{b,Y}(Y)+\bdim Y\\
 & = & \dim F_{b,Y}(Y)+\bdim X
\end{eqnarray*}
where in the first line we used the fact that $\log$ preserves dimension,
the next inequality used the fact that $g'$ is $C^{1}$, hence piecewise
Lipschitz, so $\bdim g'(Y)\leq\bdim Y$, and the last inequality is
because $g$ is bi-Lipschitz, and hence $\dim Y=\dim g(X)=\dim X$.
Combining the inequalities gives the first claim. 

For the second statement, consider a finite partition of $\mathbb{R}$
into intervals $\{I_{i}\}_{i=1}^{N}$ such that $f|_{I_{i}}:I_{i}\rightarrow f(I_{i})$
is a $C^{2}$-diffeomorphism except possibly at the endpoints, which
then lie outside of $X$. Let $X_{i}=X\cap I_{i}$ and $Y_{i}=f(X_{i})$.
Since $X=\bigcup X_{i}$ and $Y=\bigcup Y_{i}$ and since there are
finitely many sets in these unions, we have $\bdim X=\max_{i}\bdim\dim X_{i}$
and $\bdim Y=\max_{i}\bdim Y_{i}$. By the first part, $\dim F_{b,Y_{i}}(Y_{i})\geq\dim F_{b,X_{i}}(X_{i})-\dim X_{i}$.
Since $X=\bigcup X_{i}$ (a disjoint union) and $Y=\bigcup Y_{i}$,
and both unions are finite, we have $F_{b,X}(X)=\bigcup F_{b,X_{i}}(X_{i})$
and $F_{b,Y}(Y)=\bigcup F_{b,Y_{i}}(Y_{i})$, and again the box dimension
of each set is given by the maximum of the dimensions of the sets
in the union. Thus, 
\begin{eqnarray*}
\dim F_{b,Y}(Y) & = & \max_{i}\dim F_{b,Y_{i}}(Y_{i})\\
 & \geq & \max_{i}(\dim F_{b,X_{i}}(X_{i})-\bdim X_{i})\\
 & \geq & \max_{i}\dim F_{b,X_{i}}(X_{i})-\bdim X\\
 & = & \dim F_{b,X}(X)-\bdim X\qedhere
\end{eqnarray*}
\end{proof}

\section{\label{sec:Proof-of-joint-invariance}Proof of Theorem \ref{thm:joint-invariant-sets}}

Let $X\subseteq[0,1]$ or $\mathbb{R}/\mathbb{Z}$ be closed, infinite,
$T_{a}$-invariant, and let $f$ be a $C^{2}$-diffeomorphism of $\mathbb{R}$
or $\mathbb{R}/\mathbb{Z}$ such that $Y=f(X)$ is a $T_{b}$-invariant
set for some $b\not\sim a$. 

Suppose first that $\dim X>0$. Then $h_{top}(X,T_{a})=\log a\cdot\dim X>0$.
By the variational principle we can find a $T_{a}$-invariant and
ergodic probability measure $\mu$ on $X$ with $h(\mu,T_{a})>0$.
Then by \cite[Theorem 1.10]{HochmanShmerkin2015-equidistribution-from-fractal-measures},
for $\mu$-a.e. $x$, the point $f(x)$ equidistributes under $T_{b}$
for Lebesgue measure, and in particular the $T_{b}$-orbit of $f(x)$
is dense. Since $f(X)\subseteq Y$ this means that $Y=[0,1]$. Since
$f$ is a diffeomorphism, $X$ must also be an interval, and the only
interval in $[0,1]$ invariant under $T_{a}$ is $[0,1]$.

It remains to show that $\dim X>0$. 
\begin{claim}
$F_{b,X}(X)=[0,1]$.
\end{claim}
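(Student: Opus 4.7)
The plan is to mimic Furstenberg's difference-set argument from the introduction, but carried out entirely inside the local difference set $F_{b,X}(X)$. I want to show that $F_{b,X}(X) \subseteq \mathbb{R}/\mathbb{Z}$ is closed, non-empty, and invariant under backward translation by $\log_b a \bmod 1$; then the irrationality of $\log_b a$ (a direct consequence of $a\not\sim b$) forces it to equal the whole circle. Notably, this will not use $f$ or $Y$ at all: the claim follows purely from the fact that $X$ is an infinite closed $T_a$-invariant set.

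First I would dispose of the structural properties. Since $X$ is compact and infinite it has an accumulation point, so property 4 (non-triviality) gives $F_{b,X}(X)\neq\emptyset$, and property 6 (semi-continuity, last bullet) gives that $F_{b,X}(X)$ is closed in $\mathbb{R}/\mathbb{Z}$. Both of these need only that $X$ is closed and infinite.

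The key step is to exploit $T_a$-invariance. Combining property 7 (with the roles of $a$ and $b$ swapped: $T_a$ acts, and the $b$-adic difference set is tracked) with monotonicity (property 2, applied to $T_a X\subseteq X$) yields, for every $x\in X$,
\[
F_{b,X}(T_a x)\;\supseteq\; F_{b,T_aX}(T_a x)\;\supseteq\; F_{b,X}(x)-\log_b a\bmod 1.
\]
Taking unions over $x\in X$ then gives $F_{b,X}(X)-\log_b a\subseteq F_{b,X}(X)$, and iterating shows that $F_{b,X}(X)$ contains $F_{b,X}(X)-k\log_b a$ for every $k\geq 0$. The convention in Definition \ref{def:local-distance-set} to view the $T_a$-invariant $X$ as a subset of $\mathbb{R}/\mathbb{Z}$ lets me apply property 7 with no exceptional boundary points.

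The conclusion is then essentially free: picking any $t\in F_{b,X}(X)$, the orbit $\{t-k\log_b a\bmod 1:k\geq 0\}$ lies in $F_{b,X}(X)$ and is dense in $\mathbb{R}/\mathbb{Z}$ because $\log_b a\notin\mathbb{Q}$. A closed subset of $\mathbb{R}/\mathbb{Z}$ containing a dense set is everything, so $F_{b,X}(X)=[0,1]$. I do not expect a serious obstacle here; the only mild subtlety is the $\mathbb{R}/\mathbb{Z}$-versus-$[0,1]$ bookkeeping when invoking property 7, which the paper's own convention has already resolved.
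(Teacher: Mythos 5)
Your proposal is correct and essentially identical to the paper's proof: both rest on the fact that $F_{b,X}(X)$ is closed and non-empty, and both push a non-empty local difference set around by $-\log_b a$ via the $T_b$-transformation property and monotonicity, then invoke irrationality of $\log_b a$ to get density. The only cosmetic difference is that the paper fixes a single accumulation point $x_0$ and tracks $E=F_{b,X}(x_0)-n\log_b a$ under iteration, whereas you phrase the same translation-invariance directly at the level of the union $F_{b,X}(X)$.
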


\begin{proof}
$X$ is closed and infinite it contains an accumulation point $x_{0}\in X$,
whence 
\[
E=F_{b,X}(x_{0})\neq\emptyset
\]
Applying $T_{a}$ we have 
\[
F_{b,T_{a}^{n}X}(T_{a}^{n}x_{0})\supseteq E-n\log_{b}a\bmod1
\]
Since $X$ is $T_{a}$-invariant, 
\[
F_{b,X}(X)\supseteq\bigcup_{n\in\mathbb{N}}F_{b,X}(T_{a}^{n}x_{0})\supseteq E-\{n\log_{b}a\}_{n\in\mathbb{N}}\bmod1
\]
But $b\not\sim a$, so the set $\{n\log_{b}a\}_{n\in\mathbb{N}}$
is dense modulo $1$, hence also $E-\{n\log_{b}a\}_{n\in\mathbb{N}}$
and $F_{b,X}(X)$ are dense modulo 1. Since $F_{b,X}(X)$ is closed,
$F_{b,X}(X)=[0,1]$.
\end{proof}
To prove $\bdim X>0$, suppose by way of contradiction that $\bdim X=0$.
Now, $Y=f(X)$, so by Proposition \ref{prop:relating-dimFX-and-dimFfX},
\[
\dim F_{b,Y}(Y)\geq\dim(F_{b,X}X)-\bdim X=1-\dim X>0
\]
and so by Proposition \ref{prop:relating-dimY-and-dimFY}, $\dim Y\geq\frac{1}{2}\dim F_{b,Y}(Y)>0$.
But $f$ is bi-Lipschitz, so $\dim f(X)=\dim X=\dim Y>0$, as desired.

\section{\label{sec:Proof-of-partial-invariance}Proof of Theorem \ref{thm:partial-invariance-under-f}}

\textbf{Proof of part (\ref{transitive-case:minimal})}: Let $X\subseteq[0,1]$
be an infinite, minimal $T_{a}$-invariant set. Given $x,y\in X$
there is a sequence $n_{k}\rightarrow\infty$ such that we have $T_{a}^{n_{k}}x\rightarrow y$.
Since 
\[
F_{a,X}(x)\subseteq F_{a,T_{a}^{n_{k}}X}(T_{a}^{n_{k}}x)=F_{a,X}(T_{a}^{n_{k}}x)
\]
for all $k$, by semi-continuity of $F_{a,X}(\cdot)$ we conclude
that $F_{a,X}(x)\subseteq F_{a,X}(y)$. Since $x,y\in X$ were arbitrary,
$F_{a,X}(x)$ is independent of $x\in X$. We denote this set by $E$.
Since $X$ is infinite there is an accumulation point $x_{0}\in X$,
so $E=F_{a,X}(x_{0})\neq\emptyset$. 

Suppose that $I$ is a non-empty open interval and $f:I\rightarrow\mathbb{R}$
a $C^{1}$-embedding such that $f(X\cap I)\subseteq X$. Let $x\in X\cap I$,
we must show that $\alpha=\log_{a}|f'(x)|\in\mathbb{Q}$. Indeed suppose
$\alpha\notin\mathbb{Q}$. Then, writing $y=f(x)$, by the last paragraph
we have 
\begin{eqnarray*}
E & = & F_{a,X}(y)\\
 & \supseteq & F_{a,f(X)}(y)\\
 & = & F_{a,X}(x)-\alpha\bmod1\\
 & = & E+\alpha\bmod1
\end{eqnarray*}
Thus  $E$ is closed, non-empty and invariant under $t\mapsto t+\alpha\bmod1$.
Since $\alpha$ is irrational this implies $E=[0,1]$.

Now, for $x\in X$ we have $F_{a,X}(x)\subseteq\log_{a}(X-X)$ by
Proposition \ref{prop:relating-dimY-and-dimFY}. Thus $\log_{b}(X-X)$
has non-empty interior, so the same is true of $X-X$, and since $X-X\bmod1$
is $T_{a}$-invariant this implies that 
\[
X-X=[0,1]\bmod1
\]
But, since $X$ is minimal, this is impossible by \cite[Theorem III.1]{Furstenberg67}.

\textbf{Proof of part (\ref{transitive-case:restricted})}: We recall
that a set in a complete metric space is called residual it it contains
a dense $G_{\delta}$ subset. By Baire's theorem the intersection
of countably many residual sets is residual, and a residual set which
is also an $F_{\sigma}$ set contains a dense open set.

Suppose that $X$ is perfect and transitive, and self-restricted,
and that $f(X)=X$ for some local diffeomorphism $f\in C^{1}$. The
proof is very similar to the minimal case. Indeed, since $X$ is perfect,
$F_{a,X}(x)\neq\emptyset$ for all $x\in X$, in particular at transitive
points. By the same argument as above, $F_{a,X}(x)$ takes the same
value for all transitive points $x\in X$. Let $W\subseteq X$ denote
the set of transitive points, which is well-known to be is a residual
set in $X$. 

Let $I\subseteq[0,1]$ be a non-trivial interval such that, writing
$J=f(I)$, the restriction $f:X\cap I\rightarrow X\cap J$ is bijective.
Clearly $W\cap I$ is residual in $X\cap I$. At the same time, $f:X\cap I\rightarrow X\cap J$
is a homeomorphism, and $W\cap J$ is residual in $X\cap J$, so $f^{-1}(W\cap J)$
is residual in $X\cap I$. Consequently, $W\cap f^{-1}(W)\cap X\cap I$
is residual in $X\cap I$. Now, for any $x$ in this set, both $x$
and $f(x)$ are transitive. Arguing as in the minimal case we conclude
that if $\log_{a}f'(x)\notin\mathbb{Q}$ then $F_{a,X}(x)=[0,1]$,
hence $X-X=[0,1]\bmod1$. But this is impossible by restrictedness
of $X$.

By the last paragraph, the set of $x\in X$ such that $f'(x)\in\{a^{s}\,:\,s\in\mathbb{Q}\}$
is residual, i.e. contains a dense $G_{\delta}$. On the other hand
this set is just $(f')^{-1}(\{a^{s}\,:\,s\in\mathbb{Q}\})=\bigcup_{s\in\mathbb{Q}}(f')^{-1}(a^{s})$,
and by continuity of $f'$ this is an $F_{\sigma}$-set. By Baire's
theory applied in the compact metric space $X$, the set in question
must contain a dense and open (relative to $X$) set. 

\textbf{Real-analytic case of (\ref{transitive-case:minimal}) and
(\ref{transitive-case:restricted})}:\textbf{ }It remains to note
that if $f$ is real-analytic, then our conclusion shows that $f'(x)$
belongs to the countable set $\{a^{s}\,:\,s\in\mathbb{Q}\}$ for an
uncountable number of $x$, hence $f'$ takes on some rational power
$a^{s}$ of $a$ on a convergent sequence, and being itself real-analytic,
$f'\equiv a^{s}$. Thus $f$ is affine, and has the stated form.

\textbf{Proof of main statement of theorem}:\textbf{ }Let $X\subseteq[0,1]$
be a closed perfect, transitive and non-trivial $T_{a}$-invariant
set and $f\in C^{1}$ piecewise curved. If $\dim X=0$ then $X$ is
self-restricted, so $f(X)=X$ implies that $f'(x)\in\{a^{s}\,:\,s\in\mathbb{Q}\}$
for uncountably many $x\in X$; this is impossible because $f'$ is
piecewise strictly monotone, and so takes every value at most countably
many times. 

It remains to deal with the case that $\dim X>0$. By the variational
principle, there exists a $T_{a}$-ergodic probability measure $\mu$
on $X$ with $\dim\mu=\dim X$. If $f(X)=X$ then $f\mu$ is also
supported on $X$, and by Theorem \ref{thm:measures}, we obtain (by
averaging $T_{a}^{n}f\mu$ along some subsequence of times) a measure
on $X$ of dimension $>\dim X$, which is impossible.

\textbf{Proof of part} (\ref{transitive case:affine}): Let $f(x)=rx+t$
with $r\neq0$. If $\dim X=0$, let $x\in X$ with $F_{a,X}(x)\neq\emptyset$.
Then $F_{a,X}(f^{n}x)=F_{a,X}(x)-n\log_{a}|r|\bmod1$, and if $\log_{a}|r|\notin\mathbb{Q}$
this means that $F_{a,X}(X)$ contains a dense subset of $[0,1]$
and so is equal to $[0,1]$. By Proposition \ref{prop:relating-dimY-and-dimFY}
this is inconsistent with $X$ being self-restricted. Thus $\log_{a}|r|\in\mathbb{Q}$
as claimed.

In the case $\dim X>0$, we provide a similar proof in the next section
using measure theoretic tools.

\section{Proof of Theorem \ref{thm:measures} and Theorem \ref{thm:partial-invariance-under-f}
part (\ref{transitive case:affine})}

We provide a proof sketch, since a full proof would be lengthy, and
the results in \cite{Shmerkin2016,Wu2016} can be used to give alternative
proofs of the application to Theorems \ref{thm:joint-invariant-sets}
and \ref{thm:partial-invariance-under-f}. We rely heavily on the
scenery flow methods from \cite{HochmanShmerkin2012,HochmanShmerkin2015-equidistribution-from-fractal-measures}
and additive combinatorics methods from \cite{Hochman2014-self-similar-sets-with-overlaps}
and refer the reader to those papers for definitions and notation. 

Let $\mu$ be a $T_{a}$-ergodic measure $\mu$  of dimension $0<s<1$
and $f\in C^{1}$ piecewise curved. Let $\nu=f\mu$ and let $\nu'=\lim\frac{1}{N_{k}}\sum_{n=0}^{N_{k}-1}T_{a}^{n}(f\mu)$
for some sequence $N_{k}\rightarrow\infty$. We claim that there exists
$\varepsilon=\varepsilon(\mu)>0$ such that $\dim\nu'>\dim\mu+\varepsilon$.

It is well known (e.g. \cite{Hochman2010}) that $\mu$ generates
an ergodic fractal distribution (EFD; \cite[Definition 1.2]{Hochman2010b})
$P$ supported on measures of dimension $s$, which are supported
(up to a bounded scaling and translation) on $X$. See e.g. \cite[Section 2.2]{Hochman2010}.
Because $X$ is porous, $P$-a.e. measure is $(1-\varepsilon')$ -entropy
porous along any sequence $[n^{1+\tau}]$ of scales, in the sense
of \cite[Section 6.3]{Hochman2017-some-problems-on-the-boundary-of-fractal-geometry},
for some $\varepsilon'$ depending only on $\dim X$.

Since $f$ is a piecewise diffeomorphism, for $\mu$-a.e. $x$ the
measure $\nu=f\mu$ $\log a$-generates $S_{\log f'(x)}^{*}P$ at
$y=f(x)$ (see e.g. the proof of \cite[Lemma 4.16]{HochmanShmerkin2015-equidistribution-from-fractal-measures}).

Let $\pi$ denote the (partially defined) operation of restricting
a measure to $[0,1]$ and normalizing it to a probability measure.
It now follows, as in \cite[Theorem 5.1]{HochmanShmerkin2015-equidistribution-from-fractal-measures},
that if $\frac{1}{N_{k}}\sum_{n=0}^{N_{k}-1}T_{a}^{n}\nu\rightarrow\nu'$
then there is an auxiliary probability space $(\Omega,\mathcal{F},Q)$
and functions $y,t:\Omega\rightarrow\mathbb{R}$ and $\eta:\Omega\rightarrow\mathcal{P}([0,1])$,
such that $t_{\omega}$, $\omega\sim Q$ is distributed like $\log f'(x)$,
$x\sim\mu$, and $\eta_{\omega}$, $\omega\sim Q$ is distributed
according to $P$, and such that
\[
\nu'=\int\pi(S_{t_{\omega}}(\eta_{\omega}*\delta_{y_{\omega}}))dQ(\omega)
\]
(the formula above differs from \cite[Theorem 5.1]{HochmanShmerkin2015-equidistribution-from-fractal-measures}
in the scaling $S_{t_{\omega}}$ of the integrand; the scaling comes
from the fact that at $\nu$-a.e. point $y=f(x)$, the measure $\nu$
generates $S_{t}^{*}P$, where $t=\log f'(x)$. Also, the theorem
in \cite{HochmanShmerkin2015-equidistribution-from-fractal-measures}
refers to $\nu'$ arising from the orbit of a single $\nu$-typical
point, not as above, but in fact the averaged version above follows
from the pointwise one).

Re-interpreting the last equation and the properties of $t,\eta$
stated before it, we find that $\nu'$ can be generated in the following
way: choose $x$ according to $\mu_{\omega}$, independently choose
a $P$-typical measure $\eta$, scale $\eta$ by $f'(x)$, and translate
by a random amount $y$ (whose distribution depends on $x,\eta$).
Changing the order with which we choose $x$ and $\eta$, we find
that for $P$-typical $\eta$ there is a probability measure $\theta_{\eta}$
on the group of affine maps of $\mathbb{R}$ such that we can represent
$\nu'$ as 
\begin{eqnarray*}
\nu' & = & \int\int\pi(T\eta)\,d\theta_{\eta}(T)\,dP(\eta)\\
 & = & \int\pi(\theta_{\eta}*\eta)\,dP(\eta)
\end{eqnarray*}
Furthermore, choosing $T\sim\theta$, the distribution of the contraction
ratio of $T$ is the same as the distribution of $f'(x)$ for $x\sim\mu$.
Since $f$ was assumed piecewise curved, $f'$ is locally bi-Lipschitz,
so the image of $\mu$ under $x\mapsto f'(x)$ has the same dimension
as $\mu$, so by the above, $\dim\theta_{\eta}\geq\dim\nu_{\omega}=s$.
Also, recall that $\eta$ has uniform entropy dimension $s$ (in the
sense of \cite[Definition  5.1]{Hochman2014-self-similar-sets-with-overlaps};
this is an immediate consequence of the definition of the measures
$\eta$, of the definition of Kolmogorov-Sinai entropy, and of the
ergodic theorem). It follows from \cite[Theorem 9]{Hochman2017-some-problems-on-the-boundary-of-fractal-geometry}
that $\hdim\theta_{\eta}*\eta\geq\dim\eta+\varepsilon=s+\varepsilon$
for some $\varepsilon=\varepsilon(s)$, hence
\[
\hdim\nu'\geq\int\hdim\theta_{\eta}*\eta\,dP(\eta)\geq s+\varepsilon
\]
which is what we wanted to show.

We now turn to the case that $f(x)=rx+t$ is affine, $f(X)\subseteq X$
and $\dim X>0$. Fix a dimension-maximizing $T_{a}$-invariant and
ergodic measure $\mu$ on $X$. Choose a typical point $x$ and consider
the EFD $P_{n}$ $\log a$-generated at $f^{n}x$. Evidently this
is $P_{n}=S_{n\log_{a}r}P_{0}$. Assuming $\log_{a}r\notin\mathbb{Q}$,
we can average and pass to a weak{*} limit, and find that $X$ supports
a measure of the form
\[
\nu=\int\int_{0}^{\log a}\pi(S_{t}(\eta*\delta_{y_{\eta,t}}))dtdP(\eta)
\]
We now again apply \cite[Theorem 9]{Hochman2017-some-problems-on-the-boundary-of-fractal-geometry}
to conclude that $\dim\nu>\dim\mu=\dim X$, a contradiction, which
shows that $\log_{a}r\in\mathbb{Q}$.

\bibliographystyle{plain}
\bibliography{bib}

\lyxaddress{Email: mhochman@math.huji.ac.il \\
Address: Einstein Institute of Mathematics, Edmond J. Safra campus,
Jerusalem 91904, Israel}
\end{document}